\makeatletter\@addtoreset{equation}{section}\makeatother
\newtheorem{theorem}{Theorem}[section]
\newtheorem{corollary}[theorem]{Corollary}
\newtheorem{proposition}[theorem]{Proposition}
\newtheorem{assumption}[theorem]{Assumption}
\newtheorem{definition}[theorem]{Definition}
\newtheorem{remark}[theorem]{Remark}
\numberwithin{equation}{section}
\title[Infinite populations as complex systems]{Infinite populations of migrants as complex systems: self-regulation}
\author{ Yuri  Kozitsky}
\address{Instytut Matematyki, Uniwersytet Marii Curie-Sk{\l}odowskiej, Plac Marii Curie-Sk{\l}odowskiej 1, 20-031 Lublin, Poland}
\email{jkozi@hektor.umcs.lublin.pl}
\keywords{Markov evolution, Fokker-Planck equation, correlation
function, Poisson state, kinetic equation}
\begin{document}

\subjclass{60J80; 92D25; 82C22}%

\begin{abstract}

A model is proposed and studied describing an infinite population of
point migrants arriving in and departing from $X\subseteq
\mathds{R}^d$, $d\geq 1$. Both these acts occur at random with
state-dependent rates. That is, depending on their geometry the
existing migrants repel and attract the newcomers, which makes the
population a complex system. Its states are probability measures on
an appropriate configuration space, and their evolution $\mu_0 \to
\mu_t$ is obtained by solving the corresponding Fokker-Planck
equation. The main result is the conclusion that this evolution of
states preserves their sub-Poissonicity, and hence a local
self-regulation (suppression of clustering) takes place due to the
inter-particle repulsion -- no matter of how small range. Further
possibilities to study the proposed model with the help of this
result are also discussed.

\end{abstract}

\maketitle

\section{Introduction and Setup}
\subsection{Infinite populations in noncompact habitats}
``A complex system is any system featuring a large number of
interacting components (agents, processes, etc.) whose aggregate
activity is nonlinear (not derivable from the summations of the
activity of individual components) and typically exhibits
hierarchical self-organization under selective pressures." This
typical description\footnote{see
http://www.informatics.indiana.edu/rocha/publications/complex/csm.html}
 (definition) of a complex system clearly demonstrates that self-regulation belongs to its
crucial properties arising from interactions between the
constituents. Large systems of living entities are always named in
this context. The development of the mathematical theory of such
systems is a challenging task of modern applied  mathematics
\cite{BB,B,Neuhauser}. This includes also modeling populations as
configurations of point entities placed in some continuous habitat,
see \cite{BP1,BP3,Mu,Ova,O}, in particular, those where the dynamics
amounts to the appearance and disappearance of their members. For
such models, a key question of their theory is how the
appearance/disappearance of a given entity is related to its
interaction with the rest of the population and what might be the
outcome of such interaction.

In simple models, the considered populations are finite and have no
structure. The state space of such a population is usually taken as
the set of nonnegative integers $\mathds{N}_0 :=\mathds{N}\cup
\{0\}$. In such a case, in state $n\in \mathds{N}_0$ the population
consists of $n$ entities, which is its complete characterization.
Then the only observed result of the interplay between the
appearance and disappearance is the evolution of the number of
entities in the population. Of course, possible interactions in this
case can be taken into account only indirectly. The theory of such
models goes back to works by A. Kolmogorov and W. Feller, see
\cite[Chapter XVII]{Feller} and, e.g., \cite{Banas,Ri}, for a more
recent account of the related concepts and results. Therein, the
time evolution of the probability of having $n$ entities is obtained
by solving the Kolmogorov equation with a tridiagonal infinite
matrix on the right-hand side. Its entries are expressed in terms of
birth and death rates, $\lambda_n$ and $\mu_n$, respectively.
Possible influence of the entities on each other is reflected in the
dependence of theses rates on $n$. If the increase of $\lambda_n$
and $\mu_n$ is controlled by affine functions of $n$, the solution
of the Kolmogorov equation is given by a stochastic semigroup, see,
e.g., \cite{Banas} and the literature quoted in that work.

A population becomes a complex system if it is given an inner
structure closely related to interactions of its constituents -- now
explicitly taken into consideration. An inner structure of a given
population can be set by assigning a trait to each of its
constituents. Usually, interactions are binary, dependent on the
traits of the interacting entities. The set of all such traits, $X$,
is mostly a locally compact topological space. It is called
\emph{habitat} being interpreted as the set of spatial locations of
the entities. Its topology is employed to determine the (finite)
collection of entities interacting with a given one. Namely, it is
supposed that the traits of the entities interacting with the entity
with trait $x$ lie in a compact neighborhood of $x$. This determines
the \emph{local structure} of the population. In order to
qualitatively distinguish between local and global aspects of its
dynamics, one has to assume that $X$ is noncompact. If the whole
population is finite, it is contained in a compact subset $Y\subset
X$ -- a finite sum of compact sets containing the trait of each
single entity. If the population `stays' in this $Y$ forever, then
$Y$ is an actual habitat of the population, in contrast to our
assumption. If in the course of evolution the population disperses
beyond each compact subset of $X$, it can be characterized as
\emph{developing}. In such systems, boundary effects play a major
role, see \cite{Cox}, whereas inter-particle interactions in the
bulk are not essential \cite{K}. Thus, in order to understand
mathematical mechanisms of interaction-induced self-regulation, one
has to study the dynamics of an infinite population in a noncompact
habitat.

\subsection{Poisson states and self-regulation in infinite populations}
In this article, we propose and study a model of an infinite
population of point `particles' (migrants),  arriving in and
departing from  a habitat $X\subseteq \mathds{R}^d$, $d\geq 1$. Both
these acts of their dynamics occur at random and are described in a
way that takes into account possible heterogeneity of the habitat
and inter-particle attraction and repulsion (competition). It is
clear, cf. \cite[Sect. 2.3]{KK2}, that without interaction the
distribution of migrants should eventually reflect the heterogeneity
of the habitat and be eventually Poissonian \cite{Kingman} -- in
view of the randomness mentioned above and the lack of interactions.
It might also be clear that if the already existing population
attracts the newcomers, they can form dense clusters and thus their
distribution will no longer be Poissonian. Then the question which
we raise and answer here is whether mutual repulsion (competition)
-- which gives rise to the increase of emigration -- can prevent the
appearance of dense clusters and thus make the states nearly
Poissonian. If this is the case, one can qualify it as a kind of
self-regulation of the considered population of migrants --
similarly as in the case of birth-and-death \cite{KK1} and
fission-death \cite{KT} systems.

For convenience, including for having translation invariance, we
take the habitat $X=\mathds{R}^d$, equipped with the natural
(Euclidean) distance and the corresponding mathematical structures:
topology, Borel $\sigma$-field of subsets, etc. The (pure) states of
the population are \emph{configurations} -- locally finite subsets
$\gamma \subset \mathds{R}^d$. The latter means that, for a given
$\gamma$, the set $\gamma_\Lambda:=\gamma\cap\Lambda$ is finite
whenever $\Lambda \subset \mathds{R}^d$ is compact. By $\Gamma$ we
denote the collection of all such configurations $\gamma$. For each
compact $\Lambda\subset \mathds{R}^d$, one defines the counting map
$\Gamma \ni \gamma \mapsto N_\Lambda (\gamma) := |\gamma_\Lambda|$,
where $|\cdot|$ denotes cardinality. Thereby, one sets
$\Gamma^{\Lambda,n}:=\{ \gamma \in \Gamma : |\gamma_\Lambda| = n\}$,
$n \in \mathds{N}_0$, and equips $\Gamma$ with the $\sigma$-field
generated by all such $\Gamma^{\Lambda,n}$. This allows for
considering probability measures on $\Gamma$ as states of the
population -- a natural way of doing this in view of the randomness
of the basic evolution acts. Among such states are Poissonian ones.
In these states, the particles are independently distributed over
$\mathds{R}^d$, see \cite[Chapter 2]{Kingman}. They may serve as
reference states, with which other states are compared.

The homogeneous Poisson measure $\pi_\varkappa$ with density
$\varkappa
>0$ is defined by its values on $\Gamma^{\Lambda,n}$ with $n\in
\mathds{N}_0$ and compact $\Lambda$, given by the formula
\begin{equation}
  \label{J1}
\pi_\varkappa (\Gamma^{\Lambda,n}) = \left(\varkappa {\rm
V}(\Lambda)\right)^n \exp\left( - \varkappa {\rm V}(\Lambda)
\right)/ n!,
\end{equation}
where ${\rm V}(\Lambda)$ denotes Lebesgue's measure (volume) of
$\Lambda$. The probabilistic interpretation of (\ref{J1})  might be
as follows: $\Gamma^{\Lambda,n}$ is the event ``there is $n$
particles in $\Lambda$", and $\pi_\varkappa (\Gamma^{\Lambda,n})$ is
its probability if the state is $\pi_\varkappa$. The expected number
of particles in $\Lambda$ then is $\varkappa{\rm
V}(\Lambda)=$density$\times$volume. The independent character of the
distribution of particles is reflected in the fast decay of the
right-hand side of (\ref{J1}) with $n$. Then, for a state $\mu$, the
appearance of dense clusters of particles in a given $\Lambda$ in
this state can be established by the fact that the decay of
$\mu(\Gamma^{\Lambda,n})$ is not as fast as in (\ref{J1}), i.e., the
probability law of $N_\Lambda$ has a \emph{heavy tail}. In this
article, we employ \emph{sub-Poissonian} states (cf. Definition
\ref{J1df} and Remark \ref{I1rk} below), for each of which and for
every compact $\Lambda \subset \mathds{R}^d$, the following holds
\begin{equation}
  \label{J2}
\forall n\in \mathds{N}\qquad \mu(\Gamma^{\Lambda,n}) \leq n!
\left(\frac{e}{n}\right)^n \pi_\varkappa(\Gamma^{\Lambda,n}) ,
\end{equation}
with some positive $\varkappa$. In view of this bound,
sub-Poissonian states are characterized by the lack of
\emph{clustering} or, equivalently, by the lack of heavy tails of
the law of $N_\Lambda$ (following by (\ref{J2}) and Stirling's
formula). The particles in sub-Poissonian state are either
independent in taking their positions or `prefer' to stay away of
each other.

The counting map $\Gamma \ni \gamma \mapsto N_\Lambda (\gamma)$ can
also be defined for $\Lambda = \mathds{R}^d$. Set $\Gamma^n=
\{\gamma\in \Gamma: |\gamma|=n\}$. The set of \emph{finite}
configurations
\begin{equation}
 \label{J3a}
 \Gamma_0:= \bigcup_{n\in \mathds{N}_0} \Gamma^n
\end{equation}
is clearly a measurable subset of $\Gamma$. In a state, $\mu$, with
the property $\mu(\Gamma_0)=1$, the system  is \emph{finite}. Let us
show that $\pi_\varkappa(\Gamma_0) =0$. To this end, we take an
exhausting sequence $\{\Lambda_m\}_{m\in \mathds{N}}$ of compact
$\Lambda_m \subset X$. That is, $\Lambda_m \subset \Lambda_{m+1}$,
$m\in \mathds{N}$, and each $x\in X$ is eventually contained in its
elements. It is clear that ${\rm V}(\Lambda_m) \to +\infty$ in this
case. Moreover, for each $n$, it is possible to pick
$\{\Lambda_m\}_{m\in \mathds{N}}$ in such a way that
\begin{equation}
  \label{JJ3a}
 \sum_{m=1}^{\infty} \pi_\varkappa (\Gamma^{\Lambda_m,n}) < \infty,
\end{equation}
see (\ref{J1}). Assume now that $\pi_{\varkappa} (\Gamma_0)>0$. By
(\ref{J3a}) it then follows that $\pi_{\varkappa} (\Gamma^n)>0$ for
some $n\in \mathds{N}$. Fix this $n$ and write $\Gamma^n =
\Gamma^{\Lambda_m,n} \cup \Gamma_c^{\Lambda_m,n}$,
$\Gamma_c^{\Lambda_m,n}:= \Gamma^n \setminus \Gamma^{\Lambda_m,n}$,
which makes sense for each $m$. For a given $\varepsilon$, we pick
$m_\varepsilon$ such that ${\rm V}(\Lambda_{m_\varepsilon})
\varkappa \geq n$ and
\begin{equation}
  \label{JJ3b}
 \max\left\{ \pi_{\varkappa} (\Gamma^{\Lambda_{m_\varepsilon},n}); \sum_{k=m_\varepsilon+ 1}^{\infty} \pi_\varkappa (\Gamma^{\Lambda_k,n}) \right\}
 < \frac{\varepsilon}{2},
\end{equation}
which is possible in view of (\ref{J1}) and (\ref{JJ3a}). Clearly,
the letter inequality remains true after replacing $m_\varepsilon$
by any $m>m_\varepsilon$. By our assumptions each $\gamma\in
\Gamma^n$ is contained in some $\Lambda_m$, hence
$$\forall m>m_\varepsilon \qquad \Gamma_c^{\Lambda_m,n} \subset \bigcup_{k=m_\varepsilon+
1}^{\infty} \Gamma^{\Lambda_k,n},$$ by which and (\ref{JJ3b}) we
then conclude that $\pi_\varkappa (\Gamma^n) < \varepsilon$, which
in fact yields $\pi_\varkappa (\Gamma_0) =0$. Hence, the system in
state $\pi_\varkappa$ is infinite. A nonhomogeneous Poisson measure
$\pi_\varrho$, characterized by a density $\varrho:\mathds{R}^d\to
[0,+\infty)$, satisfies (\ref{J1}) with $\varkappa {\rm V}(\Lambda)$
replaced by $\int_\Lambda \varrho(x) dx$. Then either
$\pi_\varrho(\Gamma_0) =1$ or $\pi_\varrho(\Gamma_0) =0$, depending
on whether or not $\varrho$ is globally integrable. In this work, we
consider infinite systems.

\subsection{The model}

To characterize  states on $\Gamma$ one employs {\it observables} --
appropriate functions $F:\Gamma \rightarrow \mathds{R}$. Their
evolution is obtained from the backward Kolmogorov equation
\begin{equation}
 \label{R2}
\frac{d}{dt} F_t = L F_t , \qquad F_t|_{t=0} = F_0, \qquad t>0,
\end{equation}
where the Kolmogorov operator $L$ specifies the model. The states'
evolution is then obtained from the forward Kolmogorov (called also
Fokker--Planck) equation
\begin{equation}
 \label{R1}
\frac{d}{dt} \mu_t = L^* \mu_t, \qquad \mu_t|_{t=0} = \mu_0,
\end{equation}
related to that in (\ref{R2}) by the duality $\mu_t(F_0) =
\mu_0(F_t)$, where
\[
\mu(F) := \int_{\Gamma} F(\gamma) \mu(d \gamma).
\]
Direct solving of (\ref{R1}) assumes putting it in a Banach space
setting, which includes also defining $L^*$ as a linear operator in
the corresponding Banach space, see \cite{K}. For infinite
populations, this is usually impossible. Instead, one may solve the
corresponding weak Fokker-Planck equation
\begin{equation}
  \label{FPE}
  \mu_t (F) = \mu_0 (F) + \int_0^t \mu_s (LF) ds,
\end{equation}
that has to hold for all $F$ from a sufficiently big class of
functions. We refer the reader to the monograph \cite{Mich} for a
general theory of Fokker-Planck equations.

The model proposed and studied in this work is specified by the
following Kolmogorov operator
\begin{eqnarray}
 \label{L}
 \left(L F \right)(\gamma) & = &    \int_{\mathds{R}^d}   E^{+} (x, \gamma ) \left[F(\gamma \cup x) - F(\gamma) \right]dx\\[.2cm]
 & - & \nonumber \sum_{x\in \gamma} E^{-} (x, \gamma \setminus x \left[F(\gamma) - F(\gamma \setminus x)
 \right].
\end{eqnarray}
The first term in (\ref{L}) describes immigration with rate
\begin{equation}
 \label{J4}
E^{+} (x, \gamma ) = b^{+}(x) + \sum_{y\in \gamma} a^{+} (x-y),
\end{equation}
where $b^{+}$ and $a^{+}$ are nonnegative. Here the first term
corresponds to state-independent immigration, whereas the second one
describes attraction of the arriving `immigrants' by the existing
population. The second term in (\ref{L}) describes emigration.
Similarly as in (\ref{J4}), we take it in the form
\begin{equation}
 \label{J6}
E^{-} (x, \gamma ) =  b^{-} (x)+ \sum_{y\in \gamma} a^{-} (x-y),
\end{equation}
where both $b^{-}$ and $a^{-}$ are nonnegative. Note that the second
term in (\ref{J6}) describes repulsion of the particle located at
$x$ by the rest of the population, that can also be considered as
competition. Note also that the dependence of $b^{\pm}$ on $x\in X$
reflects possible heterogeneity of the habitat. In the sequel, the
functions $a^{\pm}$ and $b^{\pm}$ are called kernels. The
probabilistic meaning of both terms in (\ref{L}) is as follows. For
a given $\gamma$ and $x\in \gamma$, the probability to find a
particle at $x$ after time $t$ is $\exp(- t E^{-}(x, \gamma\setminus
x))$. Likewise, the probability to find a new particle at $y\in
\mathds{R}^d$ after time $t$ is $1-\exp(- t E^{+}(y, \gamma))$,
where the difference in these two formulas is related to the
different signs of the corresponding terms of $L$ and different
initial conditions.

In this article, the model parameters are supposed to satisfy the
following.
\begin{assumption}
  \label{Ass1}
The kernels $a^{\pm}$ in (\ref{J4}) and (\ref{J6}) are continuous
and belong to $L^1 (\mathds{R}^d) \cap L^\infty (\mathds{R}^d)$. The
kernels $b^{\pm}$ are continuous and bounded.
\end{assumption} According to this we set
\begin{gather}
  \label{J6a}
 \|a^{\pm}\| = \sup_{x\in \mathds{R}^d} a^{\pm}(x), \qquad
\|b^{\pm}\| = \sup_{x\in \mathds{R}^d} b^{\pm}(x).
\end{gather}
The assumed continuity has rather technical origin, whereas the
boundedness and integrability are essential.
\begin{remark}
  \label{CArk}
  Concerning the kernels $a^{\pm}$, the following alternatives are
possible:
\begin{itemize}
  \item[(i)] (\emph{long competition}) there exists $\theta>0$ such that $a^{-} (x) \geq \theta
  a^{+}(x)$ for all $x\in
  \mathds {R}^d$;
\item[(ii)] (\emph{short competition}) for each $\theta>0$, there exists
$x\in  \mathds {R}^d$ such that $a^{-} (x) < \theta
  a^{+}(x)$.
\end{itemize}
\end{remark}
In case (i), $a^{+}$ decays faster than $a^{-}$, and hence the
effect of repulsion from the existing population prevails. If $b^{+}
(x)\equiv 0$, new members appear only due to the existing
population, which can also be interpreted  as their birth. In this
case, $a^{+}$ is usually referred to as \emph{dispersal} kernel and
(i) then  corresponds to short dispersal. This particular case of
(\ref{L}) with nonzero $b^{-}$ and $a^{-}$ is the Bolker-Pacala
model -- introduced in \cite{BP1,BP3} and studied in
\cite{KK,KK1,Omel}. Such models with short dispersal are employed to
describe, e.g., the evolution of cell communities \cite{DimaRR}. An
instance of the long competition (short dispersal) is given by
$a^{+}$ with finite range, i.e., $a^{+}(x)\equiv 0$ for all $|x|
\geq r$, and $a^{-}(x)>0$ for such $x$. In case (ii), $a^{-}$ decays
faster than $a^{+}$, and hence some of the newcomers can be out of
the competitive influence of the existing population. Models of this
kind can be adequate, e.g., in plant ecology with the long-range
dispersal of seeds, cf. \cite{Ova}. Notably, the equality
$a^{+}=a^{-}$ -- corresponding to case (i) --  does not yet mean the
lack of interaction. This is a typical example of `nonlinearity' of
a complex system mentioned above.

Particular cases of the model specified by (\ref{L}) were studied
in: (a) \cite{FKKK,KK,KK1}, case of $b^{+}\equiv 0$; (b) \cite{KK2},
case of $a^{+}\equiv 0$. In case (a), our result -- formulated in
Theorem \ref{1tm} below -- yields an extension of the corresponding
result of \cite{KK} as it holds for both cases, (i) and (ii),
mentioned in Remark \ref{CArk}.

In Section 2 below, we introduce necessary technicalities and then
formulate the result: Theorem \ref{1tm} and Corollary \ref{Marco}.
Thereafter, we make a number of comments to these statements and
compare them with the facts known for similar models. We also
discuss possible continuation of studying the proposed model based
on the result stated in Theorem \ref{1tm}. A complete proof of the
latter will be done in a separate publication.

\section{The Result}

By $\mathcal{B}(\mathds{R}^d)$ we denote the sets of all Borel
subsets of $\mathds{R}^d$. The configuration space $\Gamma$ is
equipped with the vague (weak-hash) topology, see \cite{DV1}, and
thus with the corresponding Borel $\sigma$-field
$\mathcal{B}(\Gamma)$, which makes it a standard Borel space. Note
that $\mathcal{B}(\Gamma)$ is exactly the $\sigma$-field generated
by the sets $\Gamma^{\Lambda,n}$ mentioned in Introduction.  By
$\mathcal{P}(\Gamma)$ we denote the set of all probability measures
on $(\Gamma, \mathcal{B}(\Gamma))$.

\subsection{Sub-Poissonian measures}
The space of finite configurations $\Gamma_0$ defined in (\ref{J3a})
can be equipped with the topology induced by the vague topology of
$\Gamma$. It is precisely the weak topology determined by bounded
continuous functions $f\in C_{\rm b}(\mathds{R}^d)$. Then the
corresponding Borel $\sigma$-field $\mathcal{B}(\Gamma_0)$ is a
sub-field of $\mathcal{B}(\Gamma)$. One can show that a function
$G:\Gamma_0 \to \mathds{R}$ is measurable if and only if there
exists a family of symmetric Borel functions $G^{(n)}:
(\mathds{R}^d)^n \to \mathds{R}$, $n\in \mathds{N}$ such that
\begin{equation}
  \label{C22b}
G(\{x_1 , \dots , x_n\}) =   G^{(n)}(x_1 , \dots , x_n).
\end{equation}
We also set $G^{(0)}= G(\varnothing)$ and consider the following
class of functions.
\begin{definition}
  \label{Bbsdf}
A measurable function, $G:\Gamma_0\to \mathds{R}$, is said to have
bounded support if there exist $N\in \mathds{N}$ and a compact
$\Lambda$ such that: (a) $G^{(n)}\equiv 0$ for all $n>N$; (b)
$G(\eta)=0$ whenever $\eta$ is not a subset of $\Lambda$. By $B_{\rm
bs}$ we will denote the set of all bounded functions with bounded
support. For $G\in B_{\rm bs}$,  $N_G$ and $\Lambda_G$ will denote
the least $N$ and $\Lambda$ as in (a) and (b), respectively. We also
set $C_G=\sup_{\eta\in \Gamma_0} |G(\eta)|$.
\end{definition}
For $G\in B_{\rm bs}$, set
\begin{equation}
  \label{A2}
  (KG)(\gamma) = \sum_{\eta \subset \gamma} G(\eta), \qquad
  \gamma\in \Gamma.
\end{equation}
\begin{remark}
  \label{Bbsrk}
For each $G\in B_{\rm bs}$, $KG$ is measurable and such that
$|(KG)(\gamma)| \leq C_G ( 1 + |\gamma \cap \Lambda_G|^{N_G})$ with
$C_G$, $\Lambda_G$ and $N_G$ as in Definition \ref{Bbsdf}.
\end{remark}
The Lebesgue-Poisson measure $\lambda$ is defined on $\Gamma_0$ by
the integrals
\begin{equation}
 \label{C22c}
\int_{\Gamma_0} G(\eta) \lambda(d \eta) = G(\varnothing ) +
\sum_{n=1}^\infty \frac{1}{n!} \int_{(\mathds{R}^d)^n} G^{(n)} (x_1
, \dots, x_n) d x_1 \cdots d x_n,
\end{equation}
holding for all $G\in B_{\rm bs}$, for which $G^{(n)}$'s are defined
as in  (\ref{C22b}) .

Let $\mathcal{P}_{\rm lm}(\Gamma)$ be the set of all probability
measures on $\Gamma$ that have all local moments. This means that
such measures satisfy the condition
\begin{equation*}
 \int_{\Gamma} |\gamma_\Lambda|^n \mu (d \gamma) < \infty,
\end{equation*}
holding for all $n\in \mathds{N}$ and compact $\Lambda \subset
\mathds{R}^d$. Let $\varTheta$ be the set of all continuous
compactly supported functions $\theta: \mathds{R}^d \to (-1, 0]$.
Then, for each $\mu \in \mathcal{P}(\Gamma)$, the function
\begin{equation*}
  F^\theta (\gamma) := \prod_{x\in \gamma} ( 1 + \theta (x)) = \exp\left(  \sum_{x\in \gamma} \log (1 + \theta(x)) \right), \qquad
  \gamma \in \Gamma ,
\end{equation*}
is $\mu$-integrable. For $\theta \in \varTheta$, the map $\gamma
\mapsto \sum_{x\in \gamma} \log (1 + \theta(x))$ -- and hence the
function $F^\theta$ -- are continuous in the vague topology. It is
clear that each $\theta \in \varTheta$ is absolutely integrable on
$X$; hence, one can write
\begin{equation}
  \label{norm}
  \|\theta\|= \int_X |\theta(x)| d x.
\end{equation}
For $\theta \in \varTheta$, we define functions $\Gamma_0 \ni \eta
\mapsto e(\theta;\eta)$, $\Gamma_0 \ni \eta \mapsto
e_n(\theta;\eta)$, $n\in \mathds{N}_0$, by setting
\begin{equation}
  \label{lm1}
  e(\theta;\eta) = \prod_{x\in \eta} \theta(x), \qquad e_n (\theta;\eta) = e(\theta;\eta)
  \mathds{1}_{\Gamma^n}(\eta),
\end{equation}
where $\mathds{1}_{\Gamma^n}$ is the corresponding indicator. For
$n=0$, the second term in (\ref{lm1}) has to be understood by taking
into account that
\[
\Gamma^0 = \{\varnothing\} \qquad {\rm and} \qquad \prod_{x\in
\varnothing} \theta(x) =1.
\]
Clearly, $e_n (\theta;\cdot)\in B_{\rm bs}$; hence, $K
e_n(\theta;\cdot)$ satisfies the corresponding estimate as in Remark
\ref{Bbsrk}, which implies that $Ke_n (\theta;\cdot)$ is
$\mu$-integrable for all $\mu \in \mathcal{P}_{\rm lm}(\Gamma)$.
\begin{definition}
  \label{J1df}
The set of states $\mathcal{P}_{\rm exp}(\Gamma)$ contains all those
$\mu \in \mathcal{P}_{\rm lm}(\Gamma)$, for each of which, the map
\begin{equation}
  \label{SR1}
\varTheta \ni \theta \mapsto  \mu(K e_n(\theta;\cdot)) =
\int_{\Gamma} (K e_n (\eta;\cdot))(\gamma) \mu ( d \gamma)
\end{equation}
can be extended to a continuous monomial of order $n$ on the real
Banach space $L^1 (X)$ satisfying the estimate
\begin{equation}
  \label{lm2}
\left|\mu(K e_n(\theta;\cdot))\right| \leq \frac{1}{n!} \varkappa^n
\|\theta\|^n,
\end{equation}
for a certain $\varkappa>0$ and $\|\cdot\|$ as in (\ref{norm}).
\end{definition}
It can be shown that, for each $\mu \in \mathcal{P}_{\rm
exp}(\Gamma)$, the right-hand side of (\ref{SR1}) might be written
in the form
\begin{equation}
  \label{Mar}
\mu(K e_n(\theta;\cdot)) = \frac{1}{n!} \int_{X^n} k_\mu^{(n)} (x_1,
\dots , x_n) \theta(x_1) \cdots \theta(x_n)d x_1 \cdots d x_n,
\end{equation}
where $k_\mu^{(n)}$ -- $n$-th order correlation function of $\mu$ --
is a symmetric elements of $L^\infty (X^n)$ satisfying the following
\begin{equation}
  \label{Mar1}
  0 \leq k_\mu^{(n)} (x_1,
\dots , x_n) \leq \varkappa^n,
\end{equation}
with $\varkappa$ being the same as in (\ref{lm2}). The right-hand
inequality in (\ref{Mar1}) is known as Ruelle's bound. Note that
$k_\mu^{(0)} (x_1, \dots , x_n) \equiv 1$ since $\mu(\Gamma)=1$. For
the homogeneous Poisson measure, it follows that
\begin{equation}
  \label{Mar2}
k_{\pi_\varkappa}^{(n)} (x_1, \dots , x_n) = \varkappa^n, \qquad
n\in \mathds{N}_0.
\end{equation}
For a compact $\Lambda \subset X$, let $\mathds{1}_\Lambda$ be its
indicator. Then one can write
\[N_\Lambda (\gamma) =
|\gamma\cap\Lambda| = \sum_{x\in \gamma} \mathds{1}_\Lambda (x),
\]
which can be generalized to the following, cf. (\ref{A2}),
\begin{eqnarray}
  \label{Mar3}
  N_\Lambda^n (\gamma) & = & \sum_{l=1}^n l! S(n,l) \sum_{\{x_1, \dots ,
  x_l \}\subset \gamma} \mathds{1}_\Lambda (x_l) \cdots \mathds{1}_\Lambda
  (x_l) \\[.2cm] \nonumber & =& \sum_{l=1}^n l! S(n,l) ( Ke_l) (\mathds{1}_\Lambda;\cdot)(\gamma),
\end{eqnarray}
where $S(n.l)$ is Stirling's number of second kind -- the number of
ways to divide $n$ labeled items into $l$ unlabeled groups, see
\cite[Chapter 2]{Riordan}. Now we apply (\ref{Mar}) and (\ref{Mar1})
in (\ref{Mar3}) and obtain
\begin{eqnarray}
  \label{Mar4}
  \mu(N_\Lambda^n) & = & \sum_{l=1}^n S(n,l) \int_{\Lambda^l} k^{(l)}_\mu
  (x_1 , \dots , x_l) d x_1 \cdots d x_l \\[.2cm] \nonumber & \leq & \sum_{l=1}^n S(n,l)
  \left( \varkappa {\rm V}(\Lambda)\right)^l =
  \pi_\varkappa (N_\Lambda^n) , \qquad n\in
  \mathds{N}.
\end{eqnarray}
By the latter equality it follows that
\[
\pi_\varkappa (N_\Lambda^n) = T_n (\varkappa {\rm V}(\Lambda)),
\]
where $T_n$ is Touchard's polynomial, cf. \cite[Chapter 2]{Riordan}.
Then
\begin{eqnarray}
  \label{Mar4a}
 \int_\Gamma \exp\left( \beta N_\Lambda (\gamma)\right)\pi_\varkappa (d \gamma) &
 = &
 \sum_{n=0}^\infty \frac{\beta^n}{n!} T_n (\varkappa {\rm
 V}(\Lambda)) \\[.2cm]\nonumber & = & \exp\left(\varkappa {\rm
 V}(\Lambda) (e^\beta -1)\right), \qquad \beta \in \mathds{R}.
\end{eqnarray}
At the same time, for each $\mu \in \mathcal{P}_{\rm exp}(\Gamma)$,
we have
\begin{equation}
  \label{Mar5}
  \int_{\Gamma} \exp \left(\beta N_\Lambda (\gamma) \right) \mu( d
  \gamma) = \sum_{n=0}^\infty e^{\beta n} \mu(\Gamma^{\Lambda,n}),
\end{equation}
which for $\pi_\varkappa$ reads
\begin{equation}
  \label{Mar6}
\int_{\Gamma} \exp \left(\beta N_\Lambda (\gamma) \right)
\pi_\varkappa ( d\gamma) = \sum_{n=0}^\infty e^{\beta n}
\pi_\varkappa(\Gamma^{\Lambda,n})
\end{equation}
Now we take into account (\ref{Mar4}), set in (\ref{Mar5}) and
(\ref{Mar6}) $t = e^\beta$, and obtain from (\ref{Mar4a}) the
following
\[
\sum_{n=0}^\infty t^n \mu(\Gamma^{\Lambda,n}) \leq \sum_{n=0}^\infty
t^n \pi_\varkappa(\Gamma^{\Lambda,n}) = e^{- \varkappa {\rm
V}(\Lambda)} \exp\left( t\varkappa {\rm V}(\Lambda)\right).
\]
Then (\ref{J2}) is obtained from the latter by a standard
Cauchy-like estimation.
\begin{remark}
  \label{I1rk}
Each $\mu\in \mathcal{P}_{\rm exp}(\Gamma)$ is sub-Poissonian in the
sense of (\ref{J2}) and (\ref{Mar4}). Furthermore, let $G\in B_{\rm
bs}$ be positive, i.e., $G(\eta)\geq 0$ for $\lambda$-almost all
$\eta\in \Gamma_0$, see (\ref{C22c}). Then
\[
\mu(K G) \leq \pi_\varkappa (KG) = G(\varnothing) +
\sum_{n=1}^\infty \frac{\varkappa^n}{n!}\int_{X^n} G^{(n)}(x_1 ,
\dots x_n) d x_1 \cdots d x_n.
\]
\end{remark}
The proof of the latter estimate is based on (\ref{Mar1}) and the
fact that
\begin{eqnarray}
  \label{Ma}
\mu(K G) & = & G(\varnothing) + \sum_{n=1}^\infty\frac{1}{n!}
\int_{X^n} k_\mu^{(n)} (x_1 , \dots  x_n) G^{(n)}(x_1 , \dots  x_n)
d x_1
\cdots d x_n \qquad  \\[.2cm] \nonumber
& = & \int_{\Gamma_0} k_\mu(\eta) G(\eta) \lambda ( d \eta),
\end{eqnarray}
holding for all $\mu\in \mathcal{P}_{\rm exp}(\Gamma)$. In the
second line of (\ref{Ma}) we use the \emph{correlation function} of
$\mu$, related to $k_\mu^{(n)}$, $n\in \mathds{N}_0$ in the sense of
(\ref{C22b}).

By (\ref{lm1}) we have
\[
e(\theta;\eta) = \sum_{n=0}^\infty e_n (\theta;\eta),
\]
which implies, see also (\ref{lm2}), that
\begin{equation}
  \label{I1}
\mu (Ke(\theta;\cdot)) = \mu(F^\theta)= \int_{\Gamma} \prod_{x\in
\gamma} ( 1 + \theta (x)) \mu( d \gamma).
\end{equation}
This means that $\varTheta \ni \mapsto \mu(F^\theta)$ can be
extended to a real exponential type entire function of $\theta\in
L^1(X)$. For the homogeneous Poisson measure, the integral in
(\ref{I1}) can be calculated explicitly, cf. (\ref{Mar2}),
\begin{eqnarray}
  \label{I2}
\pi_\varkappa (F^\theta) & = & 1+ \sum_{n=1}^\infty \frac{1}{n!}
\int_{X^n} k_{\pi_\varkappa}^{(n)} (x_1 , \dots , x_n)
\theta(x_1)\cdots \theta(x_n) d x_1 \cdots d x_n
\qquad \\[.2cm] \nonumber & = &\exp\left(\varkappa \int_{\mathbb{R}^d}\theta (x) d x \right).
\end{eqnarray}

\subsection{The statement}

Before formulating the result we define the set of functions
$F:\Gamma\to \mathds{R}$ which we then use in (\ref{FPE}).
\begin{definition}
  \label{Mardf}
The set $\mathcal{G}$ of functions $G:\Gamma_0 \to \mathds{R}$ is
defined as that containing all those $G$ which satisfy
\[
\forall C>0 \qquad \int_{\Gamma_0} C^{|\eta|} \left| G(\eta) \right|
\lambda ( d \eta) < \infty.
\]
\end{definition}
Note that
\[
\int_{\Gamma_0} C^{|\eta|} \left|e(\theta;\eta) \right| \lambda ( d
\eta) = 1 +\sum_{n=1}^\infty \frac{1}{n!} \left( C
\|\theta\|\right)^n <\infty.
\]
Hence, $e(\theta;\cdot) \in \mathcal{G}$ if $\theta$ is integrable.
\begin{proposition}
 \label{Marpn}
For each $G\in \mathcal{G}$, the function $\Gamma \ni \gamma \mapsto
(K G)(\gamma)$ is $\mu$-integrable whenever $\mu$ is in
$\mathcal{P}_{\rm exp}(\Gamma)$.
\end{proposition}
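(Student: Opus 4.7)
\medskip
\noindent\emph{Proof proposal.} The plan is to reduce to the case $G\ge 0$, approximate $G$ by a monotone sequence inside $B_{\rm bs}$, combine the representation \eqref{Ma} with Ruelle's bound \eqref{Mar1} on each approximant, and then pass to the limit twice by monotone convergence.

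Decomposing $G = G^{+} - G^{-}$ and noting that $|G^{\pm}| \le |G|$ reduces the claim to showing $\mu(K|G|) < \infty$: this will force the sum $\sum_{\eta \subset \gamma} |G(\eta)|$ (over the countable family of finite subsets of $\gamma$) to converge for $\mu$-a.e.\ $\gamma$, so that $(KG)(\gamma)$ is unambiguously defined via \eqref{A2} and is $\mu$-integrable through $|KG| \le K|G|$. Assume therefore $G \ge 0$. Fix an exhausting sequence $\{\Lambda_k\}_{k\in\mathds{N}}$ of compact subsets of $\mathds{R}^d$ and set
\[
G_k(\eta) := \bigl(G(\eta) \wedge k\bigr)\, \mathds{1}_{\{|\eta| \le k\}}(\eta)\, \mathds{1}_{\{\eta \subset \Lambda_k\}}(\eta), \qquad k\in \mathds{N}.
\]
Each $G_k$ belongs to $B_{\rm bs}$ in the sense of Definition \ref{Bbsdf} (with $N_{G_k} \le k$, $\Lambda_{G_k} \subseteq \Lambda_k$, $C_{G_k} \le k$), and $G_k \uparrow G$ pointwise on $\Gamma_0$, since every finite $\eta$ is eventually contained in some $\Lambda_k$ and satisfies $|\eta| \le k$ for $k$ large enough. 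For a fixed $\gamma \in \Gamma$, discrete monotone convergence applied to the countable sum in \eqref{A2} yields $(KG_k)(\gamma) \uparrow (KG)(\gamma) \in [0,+\infty]$.

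On the other hand, \eqref{Ma} is applicable to each $G_k \in B_{\rm bs}$, and together with Ruelle's bound \eqref{Mar1} (with $\varkappa$ the constant associated to $\mu$ in \eqref{lm2}) it gives
\[
\mu(KG_k) \;=\; \int_{\Gamma_0} k_\mu(\eta)\, G_k(\eta)\, \lambda(d\eta) \;\le\; \int_{\Gamma_0} \varkappa^{|\eta|}\, G(\eta)\, \lambda(d\eta),
\]
and the right-hand side is finite by Definition \ref{Mardf} taken with $C = \varkappa$. Applying the monotone convergence theorem on $(\Gamma,\mu)$ to the sequence $(KG_k) \uparrow (KG)$ therefore yields
\[
\mu(KG) \;=\; \lim_{k\to\infty} \mu(KG_k) \;\le\; \int_{\Gamma_0} \varkappa^{|\eta|} G(\eta)\, \lambda(d\eta) \;<\; \infty,
\]
which completes the argument.

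The one subtle point is the validity of the representation \eqref{Ma} for an \emph{arbitrary} $G \in B_{\rm bs}$: the excerpt introduces the correlation functions $k_\mu^{(n)}$ only through the specific test monomials $e_n(\theta;\cdot)$ via \eqref{Mar}, so extending \eqref{Ma} from that family to the whole class $B_{\rm bs}$ calls for a monotone-class (or algebraic density) argument, identifying the measure $k_\mu \cdot \lambda$ on $\Gamma_0$ with the factorial-moment structure of $\mu$. Once this extension is in hand---and the excerpt states it as a fact---the remainder of the proof is exactly the double monotone convergence described above.
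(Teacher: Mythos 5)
Your proof is correct and follows essentially the same route as the paper's: truncate $G$ to an element of $B_{\rm bs}$, apply the representation \eqref{Ma} together with Ruelle's bound \eqref{Mar1} to get a uniform estimate by $\int_{\Gamma_0}\varkappa^{|\eta|}|G(\eta)|\,\lambda(d\eta)<\infty$, and pass to the limit (the paper invokes dominated convergence where you use a positive/negative split and monotone convergence). Your extra capping of the value of $G$ at level $k$ is in fact a small improvement, since the paper's truncation $G^{\Lambda,N}$ need not be bounded when $G\in\mathcal{G}$ is unbounded, and your closing remark about extending \eqref{Ma} to all of $B_{\rm bs}$ correctly identifies the one fact the paper takes for granted.
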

\begin{proof}
For $G\in \mathcal{G}$, compact $\Lambda$ and $N\in \mathds{N}$, set
\[
G^{\Lambda,N}(\eta) =\left\{ \begin{array}{ll} G(\eta), \qquad &{\rm
} \eta \subset \Lambda  \ {\rm and} \ |\eta|\leq N; \\[.3cm] 0,
\qquad &{\rm otherwise}.
\end{array} \right.
\]
Then $G^{\Lambda,N} \in B_{\rm bs}$ and hence, see (\ref{Ma}) and
(\ref{Mar2}),
\begin{eqnarray*}
 \mu \left( K G^{\Lambda,N} \right) & = & G(\varnothing) +\sum_{n=1}^N
 \int_{\Lambda^n} k^{(n)}_\mu (x_1, \dots , x_n) G^{(n)}(x_1, \dots
 , x_n) d x_1 \cdots d x_n \qquad \qquad \\[.2cm]\nonumber & \leq &
 |G(\varnothing)| + \sum_{n=1}^\infty \frac{\varkappa^n}{n!}
 \int_{X^ n} |G^{(n)}(x_1, \dots
 , x_n)| d x_1 \cdots d x_n < \infty.
\end{eqnarray*}
Then the stated property follows by Lebesgue's dominated convergence
theorem.
\end{proof}
By Proposition \ref{Marpn} it follows that, for $\mu\in
\mathcal{P}_{\rm exp}(\Gamma)$ and $G\in \mathcal{G}$. the following
holds
\begin{eqnarray}
  \label{Mar7a}
  \mu(KG) = \int_{\Gamma_0} k_\mu (\eta) G(\eta) \lambda (d \eta),
\end{eqnarray}
see (\ref{Ma}). Now we set
\begin{equation}
  \label{Mar8}
 \mathcal{F}=\{ F= KG: G\in \mathcal{G}\}.
\end{equation}
\begin{proposition}
  \label{Mapn}
For each $F\in \mathcal{F}$, it follows that $L F\in \mathcal{F}$.
\end{proposition}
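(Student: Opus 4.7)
The plan is to compute the combinatorial $K$-conjugate of $L$ on $\Gamma_{0}$ explicitly, producing a map $G\mapsto\tilde G$ with $LKG = K\tilde G$ pointwise on $\Gamma$, and then to verify that this map sends $\mathcal{G}$ into itself. In view of (\ref{Mar8}) this immediately yields $LF\in\mathcal{F}$.

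\textbf{Step 1: Producing $\tilde G$.} Expanding (\ref{A2}) gives the elementary identities
\[
(KG)(\gamma\cup x) - (KG)(\gamma) = \sum_{\eta\subset\gamma}G(\eta\cup x), \qquad (KG)(\gamma) - (KG)(\gamma\setminus x) = \sum_{\eta\subset\gamma\setminus x} G(\eta\cup x),
\]
the first for $x\notin\gamma$ (which suffices for the Lebesgue integral in (\ref{L})) and the second for $x\in\gamma$. Substituting into (\ref{L}) together with (\ref{J4}) and (\ref{J6}) produces six pieces. Each is rearranged into a single subset sum $\sum_{\eta\subset\gamma}(\,\cdots\,)$ by applying the bijection $\xi\mapsto(\eta,z)$ with $\xi=\eta\cup\{z\}$, used in the subcases $z\in\eta$ and $z\notin\eta$ for every ``external'' variable $z\in\gamma$. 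The result is
\begin{align*}
\tilde G(\eta) &= \int_{\mathds{R}^{d}} b^{+}(x)\,G(\eta\cup x)\,dx + \sum_{y\in\eta}\int_{\mathds{R}^{d}} a^{+}(x-y)\bigl[G(\eta\cup x) + G((\eta\setminus y)\cup x)\bigr] dx \\
&\quad - G(\eta)\sum_{x\in\eta}b^{-}(x) - \sum_{\substack{x,y\in\eta\\ x\neq y}} a^{-}(x-y)\bigl[G(\eta) + G(\eta\setminus y)\bigr].
\end{align*}

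\textbf{Step 2: Showing $\tilde G\in\mathcal{G}$.} Fix $C>0$ and choose $C'>C$. The two Lebesgue--Poisson identities
\begin{gather*}
\int_{\Gamma_{0}}\!\int_{\mathds{R}^{d}} F(\eta\cup x)\, dx\, \lambda(d\eta) = \int_{\Gamma_{0}}|\xi|F(\xi)\,\lambda(d\xi),\\
\int_{\Gamma_{0}}\sum_{y\in\eta} f(\eta\setminus y,y)\,\lambda(d\eta) = \int_{\mathds{R}^{d}}\!\int_{\Gamma_{0}} f(\zeta,y)\,\lambda(d\zeta)\,dy,
\end{gather*}
both of which follow directly from (\ref{C22c}), together with the elementary bound $|\xi|^{k} C^{|\xi|}\leq c_{k}(C')^{|\xi|}$, reduce each piece of $\int_{\Gamma_{0}} C^{|\eta|}|\tilde G(\eta)|\,\lambda(d\eta)$ to a constant multiple of $\int_{\Gamma_{0}}(C')^{|\xi|}|G(\xi)|\,\lambda(d\xi)$. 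The constants involve $\|b^{\pm}\|$ (for the $b^{\pm}$ pieces), $\|a^{\pm}\|$ (for the pieces in which no variable is removed from $\eta$ before integration, estimated by pulling out the $L^{\infty}$ norm of the kernel) and the $L^{1}(\mathds{R}^{d})$ norms of $a^{\pm}$ (for the pieces in which the variable $y$ is removed from $\eta$ and the remaining $y$-dependence goes through $a^{\pm}(\cdot-y)$, so that $\int dy$ produces $\|a^{\pm}\|_{L^{1}}$). Assumption \ref{Ass1} guarantees the finiteness of all four norms, and $G\in\mathcal{G}$ applied with the arbitrary parameter $C'$ makes every piece finite, whence $\tilde G\in\mathcal{G}$.

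\textbf{Main obstacle.} The delicate part is Step 1: the repulsion term $\sum_{x\in\gamma}\sum_{y\in\gamma\setminus x}a^{-}(x-y)\sum_{\eta\subset\gamma\setminus x}G(\eta\cup x)$ from the death operator depends on two external points $x,y$ of $\gamma$, and rearranging it into a single subset sum requires splitting into four subcases according as $x$ and $y$ lie in $\eta$ or in $\gamma\setminus\eta$. The combinatorial bookkeeping, not any analytic difficulty, is the real work. Once $\tilde G$ is in hand, Step 2 is a routine computation based on Assumption \ref{Ass1} and the two Lebesgue--Poisson identities above.
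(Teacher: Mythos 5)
Your proposal is correct and follows essentially the same route as the paper: you descend $L$ through $K$ to an operator on $\Gamma_0$ (your $\tilde G$ coincides with the paper's $\widehat{L}G$ in (\ref{Mar9}) after relabelling the summation variables) and then bound each piece of $\int_{\Gamma_0}C^{|\eta|}|\tilde G(\eta)|\,\lambda(d\eta)$ by a constant multiple of the analogous integral of $|G|$ with a larger parameter $C'>C$, exactly as the paper does with its operators $A_1,\dots,A_4$ and the constants $\delta^{\pm}(C,\varepsilon)$. The only difference is that you derive the conjugation identity $LKG=K\widehat{L}G$ by direct combinatorics, whereas the paper cites \cite[Proposition 3.1]{Dima0} for $G\in B_{\rm bs}$ and extends by continuity.
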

\begin{proof}
For $G\in B_{\rm bs}$, se define, cf. (\ref{L}),
\begin{eqnarray}
  \label{Mar9}
(\widehat{L}G)(\eta)& = & \int_X E^{+} (x,\eta) G(\eta\cup x) d x +
\sum_{x\in \eta} \int_X a^{+} (x-y) G(\eta\setminus x\cup y) d y
\qquad \qquad
\\[.2cm] \nonumber & - & \left( \sum_{x\in \eta} E^{-} (x, \eta \setminus
x)\right) G(\eta) - \sum_{x\in \eta} \left( \sum_{y\in \eta\setminus
x} a^{-}(x-y ) \right) G(\eta\setminus x) \\[.2cm] \nonumber & =: &
(A_1 G)(\eta) +\cdots + (A_4 G)(\eta),
\end{eqnarray}
that ought to hold for $\lambda$-almost all $\eta\in \Gamma_0$. Here
$E^{\pm}$ are given in (\ref{J4}) and (\ref{J6}). It is clear that
$|(\widehat{L}G)(\eta)|< \infty$ as the sums in (\ref{Mar9}) are
finite and the integrals are taken over a compact subset of $X$. To
prove that $LF \in \mathcal{F}$ we have to show that: (a)
$\widehat{L}$ can be extended to a self-map of $\mathcal{G}$; (b)
this extension and the operator defined in (\ref{L}) satisfy $L KG =
K \widehat{L} G$ holding for all $G\in \mathcal{G}$. To get (a) we
employ the following evident property of the integrals defined in
(\ref{C22c})
\begin{equation}
\label{I4} \int_{\Gamma_0} \left(\int_X H(x , \eta) d x \right)
\lambda ( d \eta) = \int_{\Gamma_0} \left( \sum_{x\in \eta} H(x ,
\eta\setminus x)\right) \lambda (d \eta),
\end{equation}
holding for all appropriate functions $H$. Along with (\ref{I4}) we
use the estimate, see (\ref{J6a}),
\begin{equation}
\label{I4a} \left|\sum_{x\in \eta} E^{\pm} (x, \eta) \right| \leq
|\eta| \left( \|b^{\pm} \| + \frac{1}{2} \| a^{\pm}\|
(|\eta|-1)\right)=: c^{\pm} (|\eta|), \qquad \eta \in \Gamma_0.
\end{equation}
For $C>0$, set $\mathcal{G}_C = L^1 ( \Gamma_0, C^{|\cdot|} d
\lambda)$, and let $\|\cdot\|_C$ stand for the corresponding norm.
Then (a) will be done if we show that each $A_i$ defined in the last
line of (\ref{Mar9}) acts as a bounded linear operator from
$\mathcal{G}_C$ to $\mathcal{G}_{C+\varepsilon}$, that holds for
each positive $C$ and $\varepsilon$. By (\ref{I4a}) we get
\begin{equation*}
 \sup_{n\in \mathds{N}} c^{\pm} (n) \left(\frac{ C}{C+\varepsilon} \right)^n
 = : \delta^{\pm} (C, \varepsilon) <\infty.
\end{equation*}
Then by means of (\ref{I4}) it follows that
\begin{eqnarray}
  \label{1fa}
 \|A_1 G\|_C & \leq & \int_{\Gamma_0} \left(\int_{X} \left|E^{+} ( x ,\eta)
 \right| |G(\eta\cup x) | d x\right) C^{|\eta|} \lambda ( d \eta)
\\[.2cm] \nonumber & = &
 \int_{\Gamma_0}  \left| \sum_{x\in \eta} E^{+}(x, \eta\setminus x)\right| |G(\eta)| C^{|\eta|-1} \lambda ( d \eta)\\[.2cm]
 & \leq  & C^{-1} \delta^{+} (C, \varepsilon) \|G\|_{C+\varepsilon}. \nonumber
\end{eqnarray}
In a similar way, we get
\begin{eqnarray*}
 \|A_2 G\|_C &\leq & \delta^{+} (C, \varepsilon) \|G\|_{C+\varepsilon},
 \\[.2cm] \nonumber \|A_3 G\|_C &\leq & \delta^{-} (C, \varepsilon)
 \|G\|_{C+\varepsilon}, \\[.2cm] \nonumber \|A_4 G\|_C & \leq & C \delta^{-} (C, \varepsilon)
 \|G\|_{C+\varepsilon}.
 \end{eqnarray*}
In combination with (\ref{1fa}) this implies that $\widehat{L}$ acts
as a bounded linear operator from each $\mathcal{G}_C$ to
$\mathcal{G}_{C+\varepsilon}$, which yields (a) since
\[
\mathcal{G}= \bigcap_{C>0} \mathcal{G}_C.
\]
For $G \in B_{\rm bs}$, the equality $L KG = K \widehat{L} G$
follows by \cite[Proposition 3.1, page 209]{Dima0}. Its extension to
$G\in \mathcal{G}$ follows as in (a).
\end{proof}
Now we are at a position to formulate our main statement.
\begin{definition}
  \label{Mardf}
By a solution of the Fokker-Planck equation (\ref{FPE}) we
understand a map $[0,+\infty) \ni t \mapsto \mu_t\in
\mathcal{P}(\Gamma)$ such that: (a) each $F\in \mathcal{F}$, see
(\ref{Mar8}), is $\mu_t$-absolutely integrable for all $t\geq 0$;
(b) for each $F\in \mathcal{F}$, the map $[0,+\infty) \ni t \mapsto
\mu_t(LF)\in \mathds{R}$ is integrable on each $[0,T]$, $T>0$ and
(\ref{FPE}) is satisfied.
\end{definition}
According to this definition, if $t\mapsto \mu_t$ is a solution,
then $t\mapsto \mu_t (F)\in \mathds{R}$ is absolutely continuous,
and hence $\mu_t (F) \to \mu_0 (F)$ as $t\to 0$, where $\mu_0$ is
considered as the initial condition for (\ref{FPE}).
\begin{theorem}
  \label{1tm}
For each $\mu_0 \in \mathcal{P}_{\rm exp}(\Gamma)$, there exists a
unique map $[0,+\infty) \ni t \mapsto \mu_t\in \mathcal{P}_{\rm
exp}(\Gamma)$ that solves (\ref{FPE}).
\end{theorem}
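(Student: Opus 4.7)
\emph{Reduction to correlation functions.} My plan is to pass to the correlation function picture, since the structure already laid out in Section 2 makes this the natural setting. By (\ref{Mar7a}) and Proposition \ref{Mapn}, testing the weak equation (\ref{FPE}) against $F = KG$ with $G \in \mathcal{G}$, and using $LKG = K\widehat{L}G$, converts (\ref{FPE}) into
\[
\int_{\Gamma_0} k_{\mu_t}(\eta) G(\eta) \lambda(d\eta) = \int_{\Gamma_0} k_{\mu_0}(\eta) G(\eta) \lambda(d\eta) + \int_0^t \int_{\Gamma_0} k_{\mu_s}(\eta) (\widehat{L}G)(\eta) \lambda(d\eta) \, ds,
\]
which, after introducing the formal dual $L^\Delta$ characterized by $\int G \cdot (L^\Delta k)\, d\lambda = \int (\widehat{L}G) \cdot k \, d\lambda$, becomes the weak Cauchy problem $\frac{d}{dt} k_t = L^\Delta k_t$, $k_0 = k_{\mu_0}$. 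The natural functional framework is the scale of Banach spaces $\mathcal{K}_C$ of measurable $k:\Gamma_0 \to \mathds{R}$ with $\|k\|_C := \esssup_\eta |k(\eta)|/C^{|\eta|} < \infty$; the Ruelle bound (\ref{Mar1}) means exactly $\|k_\mu\|_\varkappa \leq 1$. The operator bounds obtained in the proof of Proposition \ref{Mapn} dualize to show that $L^\Delta : \mathcal{K}_C \to \mathcal{K}_{C+\varepsilon}$ is bounded with operator norm of order $1/\varepsilon$, for every $C>0$ and $\varepsilon>0$. In other words, $L^\Delta$ is a first-order shift operator in the scale $\{\mathcal{K}_C\}_{C>0}$.

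\emph{Local existence via an Ovsyannikov-type argument.} For this kind of scale I would invoke the standard Ovsyannikov-Trev\`es construction: for any $C_* < C^*$ with $\|k_{\mu_0}\|_{C_*}$ finite, Picard iteration yields a unique continuous $t \mapsto k_t \in \mathcal{K}_{C^*}$ on some interval $[0,T(C_*,C^*))$ whose length is inversely proportional to $C^* - C_*$ times the shift constant. Concatenating such local pieces along an increasing sequence $C_* < C_1 < C_2 < \cdots$ gives a solution in arbitrarily short time but with a priori growing $C$, hence only local-in-time sub-Poissonian control.

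\emph{Preservation of the Ruelle bound -- the hard step.} The core obstacle is to promote this to a global sub-Poissonian solution, i.e.\ to show that $k_t \in \mathcal{K}_{\varkappa_t}$ with $\varkappa_t$ locally bounded on $[0,\infty)$. The strategy is to derive, from the explicit form of $L^\Delta$ read off from $A_1,\dots,A_4$ in (\ref{Mar9}), a differential inequality for the weighted suprema $\esssup_\eta k_t(\eta)/C^{|\eta|}$, exploiting that the terms dual to $A_3$ and $A_4$ -- which carry $E^-$ and thus the emigration/competition kernels $b^-,a^-$ -- have a definite negative sign and produce dissipation that dominates, for $|\eta|$ large, the loss-free immigration/attraction contributions coming from $A_1,A_2$. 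This is precisely where Assumption \ref{Ass1} must be used in earnest, and where the distinction between cases (i) and (ii) of Remark \ref{CArk} must be absorbed into a single estimate; I expect this to be technically the most demanding step, and it is this step that ultimately underlies the self-regulation claim advertised in the abstract.

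\emph{Reconstruction of $\mu_t$ and uniqueness.} Once the global Ruelle bound is in hand, to produce $\mu_t \in \mathcal{P}_{\rm exp}(\Gamma)$ from $k_t$ I would use Lenard's reconstruction criterion: approximate $L^\Delta$ by generators $L^\Delta_N$ corresponding to cut-off finite-population versions of the model, for which the evolution of states $\mu_t^N$ is classical and their correlation functions $k_t^N$ are manifestly Lenard positive-definite; show $k_t^N \to k_t$ in $\mathcal{K}_{C^*}$ uniformly on compact time intervals by reusing the Ovsyannikov estimates; conclude that the Lenard positivity conditions pass to the limit. The Ruelle bound then places $\mu_t$ in $\mathcal{P}_{\rm exp}(\Gamma)$. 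Finally, uniqueness in $\mathcal{P}_{\rm exp}(\Gamma)$ reduces, via the $K$-transform duality (\ref{Mar7a}), to uniqueness of the Cauchy problem for $L^\Delta$ in the scale $\{\mathcal{K}_C\}$, which is a standard consequence of the same shift estimates that gave existence.
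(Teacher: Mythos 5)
Your overall architecture---passing to correlation functions via the $K$-transform and the dual operator $L^\Delta$, working in the scale of Banach spaces $\mathcal{K}_C$ with norms $\esssup_\eta |k(\eta)|/C^{|\eta|}$, local existence by an Ovsyannikov-type iteration, Lenard positivity via approximation by cut-off models, and uniqueness from the same scale estimates---is exactly the strategy the paper announces (the paper itself defers the full proof to a separate publication and gives only this outline). So the route is the right one. The problem is that the step you yourself flag as ``the hard step'' is the entire content of the theorem, and the mechanism you propose for it does not work as stated.

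Concretely: dualizing (\ref{Mar9}) gives $(A_3^\Delta k)(\eta)=-\bigl(\sum_{x\in\eta}E^{-}(x,\eta\setminus x)\bigr)k(\eta)$ and $(A_4^\Delta k)(\eta)=-\int_X\bigl(\sum_{y\in\eta}a^{-}(x-y)\bigr)k(\eta\cup x)\,dx$, while $(A_1^\Delta k)(\eta)=\sum_{x\in\eta}E^{+}(x,\eta\setminus x)k(\eta\setminus x)$. Two obstructions follow. First, the ``dissipative'' term $A_4^\Delta$ acts on $k$ at \emph{larger} configurations, so its negative sign yields an upper bound only after you know $k_t\geq 0$ (indeed, only after you know $k_t$ is a genuine correlation function); but in your plan the Lenard-positivity step comes \emph{after} the a priori bound, so the argument is circular---in the known proofs for related models these two steps must be intertwined. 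Second, even granting positivity and dropping $A_4^\Delta$, a Gr\"onwall inequality for $\sup_\eta k_t(\eta)/C^{|\eta|}$ requires the quadratic coefficient $\sum_{x\in\eta}\sum_{y\in\eta\setminus x}a^{-}(x-y)$ in $A_3^\Delta$ to dominate $C^{-1}\sum_{x\in\eta}\sum_{y\in\eta\setminus x}a^{+}(x-y)$ coming from $A_1^\Delta$, uniformly in $\eta$; this is precisely the long-competition condition (i) of Remark \ref{CArk} with $\theta=1/C$, and it fails by definition in case (ii). Since the paper stresses that Theorem \ref{1tm} covers \emph{both} cases (this being its advertised improvement over \cite{KK}), your sign-based domination argument cannot close the proof; a genuinely different device is needed to control the attraction term when $a^{-}$ does not pointwise dominate $a^{+}$. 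Until that step is supplied, the proposal establishes at most local-in-time existence in the Ovsyannikov scale.
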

The proof of this statement is quite technical and will be done in a
separate publication. It consists in constructing a map $[0,+\infty)
\ni t \mapsto k_t$ such that: (a) each $k_t$ is the correlation
function of a unique $\mu_t\in \mathcal{P}_{\rm exp}(\Gamma)$, i.e.,
$k_t = k_{\mu_t}$, see (\ref{Mar}), (\ref{I1}); (b) the map
$[0,+\infty) \ni t \mapsto \mu_t\in \mathcal{P}_{\rm exp}(\Gamma)$
obtained in this way solves (\ref{FPE}). Note that, for each $F\in
\mathcal{F}$, the mentioned map $t \mapsto k_t$ is such that the
map, see (\ref{Mar7a}),
\begin{eqnarray*}
t \mapsto \mu_t(LF)=\mu_t (LK G) = \mu_t (K \widehat{L} G)   =
\int_{\Gamma_0} k_{t} (\eta) (\widehat{L} G) (\eta) \lambda ( d
\eta)
\end{eqnarray*}
is continuous and integrable on each $[0,T]$, $T>0$.
\begin{corollary}
  \label{Marco}
If the initial state of the population is sub-Poissonian, i.e.,
$\mu_0 \in \mathcal{P}_{\rm exp}(\Gamma)$, then the evolution of its
states $\mu_0\to \mu_t$ preserves this property. This is true for
both long and short competition cases mentioned in Remark
\ref{CArk}.
\end{corollary}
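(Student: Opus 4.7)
The plan is to deduce Corollary \ref{Marco} essentially as a rephrasing of Theorem \ref{1tm}, by identifying the class of sub-Poissonian measures with $\mathcal{P}_{\rm exp}(\Gamma)$ and by checking that Assumption \ref{Ass1} is indifferent to which of the two alternatives in Remark \ref{CArk} holds. The corollary is a one-line consequence once the theorem is at our disposal; the substantive content is entirely in Theorem \ref{1tm}.

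First I would make explicit the identification already indicated by the ``i.e.''\ in the statement: the hypothesis ``sub-Poissonian'' is taken in the sense of (\ref{J2}), and membership in $\mathcal{P}_{\rm exp}(\Gamma)$ implies (\ref{J2}) via the chain (\ref{Mar3})--(\ref{Mar4a}) and the Cauchy-like estimation that produces (\ref{J2}) at the end of Section~2. The converse reading, needed to interpret the conclusion of Theorem \ref{1tm} as preservation of sub-Poissonicity, is precisely the content of Remark \ref{I1rk} together with the expansion (\ref{Mar}) and Ruelle's bound (\ref{Mar1}). Thus the two formulations agree, and the assumption $\mu_0 \in \mathcal{P}_{\rm exp}(\Gamma)$ is both necessary and sufficient for the bound (\ref{J2}) to hold at time $t=0$.

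Next I would apply Theorem \ref{1tm} directly: for the given $\mu_0 \in \mathcal{P}_{\rm exp}(\Gamma)$, the theorem produces a unique map $[0,+\infty) \ni t \mapsto \mu_t$ solving (\ref{FPE}) and taking values in $\mathcal{P}_{\rm exp}(\Gamma)$. Combined with the first step, this says that each $\mu_t$ satisfies (\ref{J2}) with some $\varkappa_t>0$, which is the preservation statement of the corollary.

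Finally I would address the assertion that this works in both cases of Remark \ref{CArk}. The point is that Assumption \ref{Ass1} controls $a^{\pm}$ only through continuity and $L^1 \cap L^\infty$ membership, with no quantitative comparison required between $a^{+}$ and $a^{-}$; hence alternatives (i) and (ii) of Remark \ref{CArk} are both admissible in Theorem \ref{1tm} and the corollary follows uniformly in each. The only real obstacle is Theorem \ref{1tm} itself, namely the construction of the correlation-function evolution $t\mapsto k_t$ obeying Ruelle's bound for all $t\ge 0$, which is deferred to the separate publication; once this is granted, the corollary is immediate.
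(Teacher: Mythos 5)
Your proposal is correct and coincides with the paper's own treatment: the corollary is presented there as an immediate consequence of Theorem \ref{1tm} (whose conclusion $\mu_t\in\mathcal{P}_{\rm exp}(\Gamma)$ yields (\ref{J2}) via (\ref{Mar4}) and Remark \ref{I1rk}), with the only added remark being that uniqueness of the solution makes this the only possible evolution; the indifference to the alternatives of Remark \ref{CArk} is, as you say, because only Assumption \ref{Ass1} enters the theorem. One small caveat: the paper never proves that (\ref{J2}) is \emph{equivalent} to membership in $\mathcal{P}_{\rm exp}(\Gamma)$ (only the implication from $\mathcal{P}_{\rm exp}(\Gamma)$ is established), but this is harmless since the corollary's hypothesis is stated directly as $\mu_0\in\mathcal{P}_{\rm exp}(\Gamma)$.
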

Since the solution described in Theorem \ref{1tm} is unique, the
evolution mentioned in Corollary \ref{Marco} is the only possible,
and hence the dynamics of the population manifests
competition-caused self-regulation.

\subsection{Further comments and comparison}

For $b^{+} \equiv 0$ and $a^{-} \equiv 0$, the model considered in
this work gets exactly soluble. It is known as the continuum contact
model, see \cite{Dima}, for which there is no self-regulation.
Namely, under the following quite natural condition
\begin{equation*}
\inf_{x,y\in B} a^{+}(x-y) \geq \alpha>0,
\end{equation*}
satisfied in some ball $B$, it was proved \cite[Eq. (3.5), page
303]{Dima} that
\[
k_{\mu_t}^{(n)} (x_1, \dots, x_n) \geq \omega^{n t} n!,
\]
holding for some $\omega>0$, all $n\geq 2$ and $t>0$, and almost all
$x_1 , \dots, x_n \in B$. Hence, for $\theta(x) >0$, $x\in B$, by
(\ref{Mar}) and (\ref{SR1}) the latter implies
\[
\mu_t (K e_n(\theta;\cdot)) \geq \omega^{n t},
\]
that clearly contradicts (\ref{lm2}). It is possible to show that a
similar bound holds true also in the model described by $L$ as in
(\ref{L}) with $a^{-} \equiv 0$ and nonzero $b^{+}$. This means that
the inter-particle competition represented in $L$ by $a^{-}$ -- that
gives rise to the increase of emigration -- is the sole factor
responsible for the effect mentioned in Corollary \ref{Marco}.
Likewise, by comparing with the Bolker-Pacala model obtained from
the letter by setting $b^{+}\equiv 0$, one shows that if the
following holds
 $$\inf_{x\in X} b^{-} (x)  \geq \int_{X} a^{+} (x) d x,$$ then the correlation
functions $k_{\mu_t}$ remain bounded in time, see \cite{KK}. That
is, the global regulation may be achieved at the expense of large
emigration. Moreover, the system eventually gets empty in this case.

As mentioned above, Poissonian states are completely characterized
by their densities. That is, for $\theta \in L^1(X)$ and a Poisson
state $\pi_\varrho$, $\varrho\in L^\infty (X)$, one has, cf.
(\ref{I2}),
\[
\pi_\varrho (F^\theta) = \exp\left( \int_X \varrho (x) \theta (x) dx
\right),
\]
which means  that the corresponding correlation functions are
\[
k_{\pi_\varrho}^{(n)} (x_1 , \dots , x_n) = \varrho(x_1) \cdots
\varrho(x_n), \qquad n\in \mathds{N},
\]
and hence $\varrho (x) = k^{(1)}_{\pi_\varrho}(x)$. At the
mesoscopic level obtained by a scaling procedure, see
\cite{FKKK,Koz}, the evolution of states is described as the
evolution of their densities $k_0^{(1)} \to k_t^{(1)}$ obtained by
solving corresponding kinetic equations.  Without interactions, the
description of the evolution of Poisson states $\pi_{\varrho_0}\to
\pi_{\varrho_t}$ by equations like (\ref{FPE}) is equivalent to the
that obtained from the corresponding kinetic equations. Possible
interactions in the system are taken into account in kinetic
equations indirectly, and hence the mesoscopic description is less
accurate. At the same time, by kinetic equations -- and their more
sophisticated versions \cite{Omel} -- it is possible to get much
richer information as to the evolution of a given system, see
\cite{Omel} for a numerical study of the Bolker-Pacala model. It is
believed that, for sub-Poissonian states, passing from micro- to
meso-scale -- and hence from equations like (\ref{FPE}) to kinetic
equations -- produces `minor errors', and hence is acceptable. This
means that the sub-Poissonicity established in Theorem \ref{1tm}
`justifies' passing to the description of the evolution of the
considered system based on the kinetic equation
\begin{eqnarray}
  \label{KE}
\frac{d}{dt} \varrho_t (x) & = & \left( b^{+} (x) - b^{-} (x)
\right) \varrho_t(x) \\[.2cm] \nonumber  & + & \int_X a^{+} (x-y) \varrho_t(y) d y -
\varrho_t(x) \int_X a^{-} (x-y) \varrho_t (y) dy,
\end{eqnarray}
which can be derived from (\ref{R2}) and (\ref{L}) in the same way
as it was done for the Bolker-Pacala model in \cite{FKKK}, and for a
similar migration model in \cite{Koz}. We believe that the study of
(\ref{KE}) can yield further details of the evolution of the model
proposed here -- similarly as it was in the case for the models
studied in \cite{FKKK,Koz}. In particular, we expect to clarify the
peculiarities of the cases mentioned in Remark \ref{CArk}. Note that
the self-regulation described in Theorem \ref{1tm} --  a global
effect -- occurs even if the competition kernel is very short. We
plan to study (\ref{KE}), also numerically, in a separate
publication.

\section*{Acknowledgment}
The present research was supported by National Science Centre,
Poland, grant 2017/25/B/ST1/00051, that is cordially acknowledged by
the author.

\end{document}